\documentclass[11pt,reqno]{amsart}

\usepackage{amsfonts, amssymb, amsthm, amsmath, bbm, wasysym, enumerate, amsxtra, latexsym, fullpage, amscd, graphics, epic, sansmath, mathtools, multicol, url, hyperref, bbm, blindtext, wasysym, comment, datetime, stmaryrd,mathrsfs}
\usepackage[shortlabels]{enumitem}
\usepackage[all,cmtip]{xy}
\usepackage[dvipsnames]{xcolor}
\usepackage[makeroom]{cancel}  
\usepackage[enableskew]{youngtab}
\Yvcentermath1 
\usepackage{tikz}
\usetikzlibrary{cd}

\usepgflibrary{shapes.geometric}

\tikzstyle{V}=[draw, fill =black, circle, inner sep=0pt, minimum size=1.5pt]
\tikzstyle{C}=[draw, fill =white, circle, inner sep=0pt, minimum size=1.5pt]
\tikzstyle{over}=[draw=white,double=black,line width=2pt, double distance=.5pt]
\usetikzlibrary{arrows, positioning, calc, chains, cd}
\tikzset{
	ch/.style={circle,draw,on chain,inner sep=2pt},
	chj/.style={ch,join},
	every path/.style={shorten >=4pt,shorten <=4pt}
	}

\numberwithin{equation}{section}
\setcounter{tocdepth}{1}
\usepackage[margin = 1in]{geometry}
\theoremstyle{definition}

\newtheorem{theorem}{Theorem}[section]

\newtheorem{proposition}[theorem]{Proposition}
\newtheorem{corollary}[theorem]{Corollary}
\newtheorem{definition}[theorem]{Definition}
\newtheorem{remark}[theorem]{Remark}
\newtheorem{question}[theorem]{Question}
\newtheorem{conjecture}[theorem]{Conjecture}
\newtheorem{observation}[theorem]{Observation}
\newtheorem{example}[theorem]{Example}


\DeclareMathOperator{\Hom}{{Hom}}

\def\<{\langle}
\def\>{\rangle}

\newcommand{\CHAR}{\text{characteristic}\:}

\newcommand{\Ext}{\text{Ext}}

\newcommand{\calO}{\mathcal{O}}

\newcommand{\PP}{\mathbb{P}}
\newcommand{\PGL}{\text{PGL}}

\newcommand{\rem}{\text{rem}}
\newcommand{\Res}{\text{Res}}

\newcommand{\sing}{\text{sing}}
\newcommand{\ord}{\text{ord}}

\newcommand{\Char}{\text{char}}

\newcommand{\subOrd}{\textup{subOrd}}

\newcommand{\Z}{\mathbb{Z}}

\allowdisplaybreaks



\title{Examples relating to Green's conjecture in low characteristics and genera}

\address{Department of Mathematics, University of Georgia, Athens, GA 30602 (Estu, Im, Manning, Michaels, Rulla)}

\address{Department of Mathematics, Emmanuel College, Franklin Springs, GA 30639 (Pasko)}

\address{Department of Mathematics, Augusta State University, Augusta, GA 30904 (Wijesinghe)}

    \author[Topik Teguh Estu]{Topik Teguh Estu}
 
	\author[Mee Seong Im]{Mee Seong Im}
\address{Department of Mathematical Sciences, United States Military Academy, West Point, NY 10996 (Im: Point of Contact)}
    \email{meeseongim@gmail.com}

    \author[Benjamin Manning]{Benjamin Manning}
	\address{}
    \email{} 

        \author[Zachary Michaels]{Zachary Michaels}
\address{}
    \email{} 

    \author[Joseph Pasko]{Joseph Pasko}
 \address{}
    \email{}

    \author[William Rulla]{William Rulla}
 \address{}
    \email{}     
    
    \author[Nishan Wijesinghe]{Nishan Wijesinghe}
 \address{}
    \email{}

\keywords{computational algebraic geometry, experimental mathematics, Green's conjecture, free resolution, Betti diagram, canonical curve}    
\subjclass[2000]{Primary: 14Q05, 13P10. Secondary: 13D02, 68W01}
    
\begin{document}

\begin{abstract} 
We exhibit approximately fifty Betti diagrams of free resolutions of rings of smooth, connected canonical curves of genera $9$-$14$ in prime characteristics between $2$ and $11$.  Generic Green's
conjecture is verified for genera $9$ and $10$ for characteristics $2$, $5$, $7$, and $11$. 
\end{abstract}  
 
\maketitle 

\section{Introduction}

This paper is the product of a four week VIGRE REU participated by the authors\footnote{The addresses provided are the authors' affiliations at the time of when drafting this manuscript. The primary point of contact and her current affiliation are also provided at the end of this paper.} and conducted by the sixth author during the Summer of
2002, together with further participation supported by UGA's Summer
Undergraduate Research Program. The research collaboration was an examination of
Gr\"obner bases and their use in calculating free resolutions, with
applications to an experimental study of a conjecture of Mark Green (cf. \cite{Gre84}).
It was inspired by
\cite{Eis92,Eis02,CLO97,CLO98,Eis95}.

The conjecture concerns Riemann surfaces, but carries over to
algebraic curves.  Throughout this manuscript, 
the terms ``curve'' and
``algebraic curve'' will mean smooth, proper, non-hyperelliptic
algebraic curve.  
Each curve of genus $g$ is associated a sequence
of non-negative integers of length $\lfloor \tfrac{g-3}{2}\rfloor$;
the sequence can be thought of as a refinement of the invariant $g$.
The conjecture relates the number of zero entries of the sequence to
geometric characteristics of the curve. This paper is an empirical
examination of the kinds of sequences which can occur.  It is divided as follows.  

\subsection{Summary of the sections}
\label{subsection:summary}
In \S\ref{S:background}, we provide some background on free resolutions and canonical embeddings of curves, establish notation,
and present the motivating question of our investigation. 
In \S\ref{S:conjecture}, we present Green's conjecture and generic Green's
conjecture. 
We describe in  
\S\ref{S:REU} a synopsis of our summer research.  
Algorithms used to examine the nonsingularity and connectedness of our examples are described in \S\S\ref{S:singular} and \ref{S:connected}.
An overview on how we generated our examples, what we expected to find, and our results is given in \S\ref{S:computations}.   
We give 
the verification of generic Green's
conjecture in several small characteristics and genera $g \le 10$ in \S\ref{S:genericGreen}, and we present 
examples of Betti diagrams corresponding
to singular and reducible curves in \S\ref{S:reducible}.  
In \S\ref{S:further}, we pose questions and give suggestions for further
projects. 

\subsection*{Acknowledgment}
We thank Maurice D. Hendon and D. Wayne Tarrant for their assistance during our preparation of this manuscript, 
and to Clint McCrory for providing us this collaboration opportunity. We also thank Robert Varley for extensive encouragement and helpful feedback throughout the duration of our research, and during the preparation of this paper. All authors also thank William (Bill) Rulla for teaching the research group computational algebraic geometry and homology algebra when we were undergraduate students. 
He spoke about 120 words per minute, but nonetheless, he is an exceptional and admirable researcher. 
All authors, with the exception of M.S.I., were partially supported by University of Georgia's NSF VIGRE grant DMS \#0089927. M.S.I was partially supported by SURP, sponsored by the University of Georgia Graduate School.   

\section{Background on Canonical Curves and Free Resolutions}
\label{S:background} 

The following is a synopsis of background found in \cite{ACGH85,Eis92,Eis95,Sch86}. Let
$k$ be any algebraically closed field.

Any (nonhyperelliptic) curve $C$ has an associated canonical  embedding
in the projectivization $\PP(H^0(C,K_C)) \cong \PP^{g-1}$ of its
$g$-dimensional $k$-vector space of global regular differential forms,
obtained by identifying a choice of homogeneous coordinates of
$\PP^{g-1}$ with a basis.  The image of the embedding corresponds to a
homogeneous ideal $I = I_C$ of the coordinate ring $R :=
k[X_0,\ldots,X_{g-1}]$ of $\PP^{g-1}$. The ring $R$ is graded by degree:
\[
	R = \bigoplus_{d\ge 0} R_d,
\]
where $R_d$ is the $k$-vector space of monomials of degree $d$ in
$X_0$, \ldots, $X_{g-1}$.  The ideal $I$ correspondingly decomposes into a direct sum 
\[
	I = \bigoplus_{d\ge 0} I_d 
\]
of vector spaces, 
where $I_d$ is the subset of homogeneous polynomials of degree $d$ in
$R$ vanishing identically on $C$.  The dimensions of the $I_d$'s are
determined by the common Hilbert function for all canonical curves of
genus $g$, so they provide no distinguishing information between curves of
the same genus.

More precisely, the Hilbert polynomial  
\[
	p(t) = (2t-1)(g-1) 
\] 
of such a curve gives us the dimensions of the $I_d$'s.  
This polynomial gives $h^0(C,K_C^{\otimes d})$ for all integers $d \ge
2$, while $h^0(C,K_C) = g$.  
For each degree, there is a sub-exact sequence
\[
	0 \to I_d \to R_d \to (R/I)_d \to 0,
\]
so $\dim I_d = \dim R_d - \dim (R/I)_d$.  Furthermore, 
\[
	\dim R_d = \binom{d+g-1}{g-1}.  
\]
We see that for fixed $g$, the dimension of $I_d$ is
independent of $C$.  
The graded ring $R$ can be realized as follows:
\[
	R = \bigoplus_{d \ge 0} H^0(C,{K_C}^{\otimes d}),
\]
where each summand is just the space of homogeneous polynomials in the
coordinates of $\PP^{g-1}$.  Bases of the $H^0(C,{K_C}^{\otimes d})$
were determined by Petri (cf. \cite[pages 127--130]{ACGH85}).
Hence we see that bases depend only on the genus of $C$.

More interesting is a theorem of Petri \cite[page 131]{ACGH85}, which
states that any canonical ideal is generated by homogeneous elements
of degree two, unless $C$ is trigonal, i.e., it admits a 3:1 map to
$\PP^1$, or is isomorphic to a plane quintic. In these cases some
generators must be of degree three.  Thus any canonical curve is
the (scheme-theoretic) intersection of degree two and (possibly)
degree three hypersurfaces,
i.e., there is an exact sequence
\begin{equation}\label{E:surj} 
\begin{matrix}
R(-3)^{\oplus b_1} \\
\bigoplus \\
R(-2)^{\oplus a_1}
\end{matrix}
\xrightarrow{f} I \to 0.  
\end{equation}
Note that no differential form $\omega \neq 0$ vanishes identically on
$C$, so $I$ contains nothing of degree 1. 
Here, e.g., the ``$(-2)$'' indicates that each of the $a_1$ generators
of the free module $R^{\oplus a_1}$ is mapped to a homogeneous
polynomial of degree $2$ in $I \subseteq R$.  By Petri, the exponent
$b_1$ is nonzero if and only if $C$ is trigonal or isomorphic to a plane quintic.

The proof of Petri's theorem involves an examination of the syzygies
(relations) on the generators of $I$, i.e., a calculation of generators
of the kernel of the map $f$ in the sequence \eqref{E:surj}  (see \cite[pages 131--135]{ACGH85}).  
A natural extension of this idea is to then
examine the kernel of a surjection
\[
	R^{\oplus m} \to \ker f,
\]
where $m$ is the (minimal) number of generators of $\ker f$ over $R$.
Iterating leads to an examination of a free
resolution of $I$.  It is a fact that a (finite) minimal free
resolution
\[
0 \to R^{\oplus n_r} \to \ldots \to R^{\oplus n_2} \to 
R^{\oplus a_1 + b_1} \xrightarrow{f} I \to 0
\]
exists, and is unique up to isomorphism of exact sequences.  

Next, we discuss the structure of minimal resolutions of ideals of
canonical curves.  To simplify the discussion, given an ideal $I$ we
consider instead the deleted resolution of the canonical ring $R/I$,
\begin{equation}\label{E:seq}
	0 \to R^{\oplus n_r} \overset{\varphi_r}{\rightarrow} \ldots \to
		R^{\oplus n_2} \to R^{\oplus a_1 + b_1} \to R,
\end{equation}
where the cokernel of the rightmost map is $R/I$.  Rings of canonical
curves are Cohen--Macaulay, so the number $r$ determining the length of
the resolution is known by the formula of Auslander--Buchsbaum to be
$g-2$.

As outlined above, the second term of the sequence (\ref{E:seq})
decomposes as a sum of $R(-2)$'s and $R(-3)$'s; we can similarly be
more specific with the others.  Since the resolution is minimal, there
is never a ``degree zero'' relation on the elements of any kernel, so
the twisting must increase at each step.  Thus for example there are
no $R(-d)$ terms in the $R^{\oplus n_2}$ term of (\ref{E:seq}) for $d
= 0,1,2$.

Let $i:C\to \PP^{g-1}$ be the canonical morphism.  By considering what
happens when the contravariant functor
\[
\Hom_R(\cdot, \calO_{P^{g-1}}(-g-1))
\]
is applied to the sheafification of (\ref{E:seq}), using 
\[
\Ext^i_{P^{g-1}}(i_*\calO_C, \calO_{P^{g-1}}(-g-1)) \cong 
\begin{cases} 
0 & \text{if }0 < i < g-2 \\
i_*\calO_C & \mbox{if }i = g-2, \\ 
\end{cases}
\]
and the fact that $C$ is projectively normal, 
one finds that by reversing the arrows of (\ref{E:seq}) one gets another
deleted resolution of $R/I$.  
This ``dual'' sequence 
is therefore also a deleted resolution for $R/I$,
must be minimal, and by uniqueness is isomorphic to the
original (this is the ``Gorenstein'' property of $R/I$).  The symmetry
puts a serious restriction on the form of the resolution; in fact, any
such must be of the form
\begin{multline}\label{E:exactSeq} 
	0 \to R(-g-1) \to 
\begin{matrix}
R(-g+1)^{\oplus a_1} \\
\bigoplus \\
R(-g+2)^{\oplus a_{g-3}}
\end{matrix}
\to 
\begin{matrix}
R(-g+2)^{\oplus a_2} \\
\bigoplus \\
R(-g+3)^{\oplus a_{g-4}}
\end{matrix}
\to
\ldots \\
\ldots \to
\begin{matrix}
R(-4)^{\oplus a_{g-4}} \\
\bigoplus \\
R(-3)^{\oplus a_2}
\end{matrix}
\to
\begin{matrix}
R(-3)^{\oplus a_{g-3}} \\
\bigoplus \\
R(-2)^{\oplus a_{1}}
\end{matrix}
\to
R 
\to
0.
\end{multline}
The resolution is encoded in a \textit{Betti diagram}, which specifies 
the occurring exponents:  
\[
\begin{matrix}
1      &\cdot	 & \cdot   & \ldots &\cdot    &	\cdot   & \cdot\\
\cdot  & a_1     & a_2     & \ldots & a_{g-4} & a_{g-3}	& \cdot \\
\cdot  & a_{g-3} & a_{g-4} & \ldots & a_2     & a_1     & \cdot \\
\cdot  &\cdot    & \cdot   & \ldots &	\cdot &	\cdot   & 1. 
\end{matrix}
\]
Here the $\cdot$'s indicate zeros, and going either back a column or
up a row corresponds to twisting by $(-1)$.

Hilbert functions impose a further constraint.  Because Hilbert
functions are additive on exact sequences, and because the Hilbert
function of the $I_C$'s and $R(-k)^{\oplus j}$'s is known, it is
straightforward to derive a relation on the $a_i$'s:
\[
	a_i - a_{g-i-1} = i\binom{g-2}{i+1} - (g-i-1)\binom{g-2}{i-2}. 
\]
Thus, in the notation of \cite{Eis92}, the Betti diagram is
determined by the sequence
\begin{equation}\label{E:betti1}
	\mathbf{a} := \left(
	a_{\lfloor \tfrac{g}{2}\rfloor},\ldots,a_{g-3}\right).
\end{equation} 

The principal question motivating us is from \cite{Eis92}: 
\begin{question}[Eisenbud]
What sequences $\mathbf{a}$ can occur?
\end{question}
In \cite{Sch86}, all possible diagrams are determined through $g =
8$.  The primary goal of our research was to explore the possibilities  
for $g \ge 9$.

\section{Green's Conjecture}\label{S:conjecture} 
We give some definitions from \cite{Eis92}. 
\begin{definition}
The \textit{2-linear strand} of a resolution as in (\ref{E:exactSeq}) is
the subcomplex
\[
	 R(-g+2)^{\oplus a_{g-3}} \to  \ldots \to R(-3)^{\oplus a_2}
\to R(-2)^{\oplus a_1},
\]
which we specify by the sequence
\[
	(a_1,\ldots,a_{g-3}).  
\]
 
Because the resolution is minimal, if ever $a_i = 0$ then $a_j = 0$
for all $j \ge i$.  The \textit{2-linear projective dimension} $2LP$
of a minimal resolution is the length of its 2-linear strand, i.e., it is the 
number of nonzero entries.  
\end{definition}
The (nonhyperelliptic) curves for which $a_{g-3} \ne 0$ are exactly
the trigonal curves together with curves (of genus 6) which are
isomorphic to plane quintics.  
Green's conjecture may be viewed as an extension of this observation.
\begin{conjecture}[Green] For any (smooth) curve $C$, we have 
\[
	2LP = g-2-c,
\]
\end{conjecture}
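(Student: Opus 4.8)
The plan is to split the claimed equality into the two inequalities $2LP\ge g-2-c$ and $2LP\le g-2-c$, which are proved by completely different mechanisms, and then to be candid that, in the small characteristics at stake here, a full proof is not available and one verifies the conjecture instead. Here $c$ denotes the Clifford index of $C$, the minimum of $\deg L-2h^0(L)+2$ over line bundles $L$ with $h^0(L)\ge 2$ and $h^1(L)\ge 2$; among the non-hyperelliptic curves under consideration the smallest value $c=1$ is attained exactly by the trigonal curves and, in genus $6$, the plane quintics --- precisely the characterization recalled above of the curves with $a_{g-3}\ne 0$. In the language of the Betti diagram, $a_i$ is the exponent occurring in homological degree $i$ of the $2$-linear strand and $2LP$ is the largest $i$ with $a_i\ne0$, so Green's conjecture asserts exactly that $a_{g-2-c}\ne 0$ while $a_{g-1-c}=0$.

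For the lower bound I would use the Green--Lazarsfeld non-vanishing construction. Choose a line bundle $A$ computing the Clifford index and set $B:=K_C\otimes A^{-1}$; from wedge products of sections of $A$ against sections of $B$ one builds explicit Koszul cocycles for $(C,K_C)$, and a general-position argument shows they are not coboundaries. A Riemann--Roch computation --- $h^0(B)=h^1(A)=h^0(A)-\deg A+g-1$ --- places the resulting nonzero class in homological degree $g-2-(\deg A-2h^0(A)+2)=g-2-c$, so $a_{g-2-c}\ne 0$ and $2LP\ge g-2-c$. This half is entirely explicit and uses nothing about the characteristic of $k$.

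The reverse inequality $2LP\le g-2-c$, equivalently the vanishing $a_{g-1-c}=0$, is the hard half and is where I expect the obstacle to lie; it is the content of Voisin's theorem and is known only in restricted cases. Its proof realizes $C$ as a curve on a K3 surface $S$, relates the relevant Koszul group of $(C,K_C)$ to that of $(S,\calO_S(C))$ by a restriction argument, and computes the latter on the Hilbert scheme of points $S^{[n]}$, where the Koszul differential becomes a morphism of bundles assembled from the tautological sheaves and the required vanishing can be extracted from the geometry of $S^{[n]}$; one then transfers back to an arbitrary curve of the given Clifford index using the fact that a general member of a linear system on a general K3 is Brill--Noether general. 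Three obstacles keep this from settling the statement in our situation: the argument is Hodge-theoretic and silent in small characteristic; even over $\CC$ it reaches only curves that are general of their gonality; and there is no reduction of the general case to the generic one --- graded Betti numbers are only upper-semicontinuous in families, so a special curve may carry extra syzygies, and since its Clifford index also drops, the vanishing $a_{g-1-c}=0$ demanded of it is strictly more delicate than for the generic curve.

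Because no uniform proof is in hand --- and in small characteristic the shape of the $2$-linear strand is exactly what is in doubt --- the realistic route at the level of this paper is verification one genus and characteristic at a time: compute a minimal free resolution of the canonical ring of a random smooth connected curve by Gr\"obner-basis methods, read $2LP$ off the Betti diagram of \eqref{E:exactSeq}, and check that it equals $g-2-c$ for the generic Clifford index $c=\lfloor(g-1)/2\rfloor$. This is what \S\ref{S:genericGreen} carries out for $g\le 10$ and $\Char\in\{2,5,7,11\}$; the singular and reducible examples of \S\ref{S:reducible} then record how the diagram degrades when smoothness or irreducibility is dropped, which any eventual degeneration-based proof would have to confront.
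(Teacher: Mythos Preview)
The statement is a conjecture and the paper does not prove it; after stating it, the paper explains that one inequality is already a theorem and then turns to experimental verification of the generic case. Your proposal correctly recognizes this and is appropriately candid that no uniform proof exists.

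Where your sketch and the paper's discussion differ is in the argument for the known bound $2LP\ge g-2-c$. You build explicit Koszul cocycles from a factorization $K_C\cong A\otimes B$ in the Green--Lazarsfeld style. The paper instead gives the geometric version: a degree-$d$ pencil $L$ on $C$ cuts out, via the $2\times 2$ minors of the multiplication map $H^0(L)\otimes H^0(K_C\otimes L^{-1})\to H^0(K_C)$, a rational normal scroll $S\subset\PP^{g-1}$ containing the canonical curve, and the $2$-linear strand of $S$ (an Eagon--Northcott complex of known length) embeds in that of $C$, yielding $2LP\ge g-d$. The two arguments are closely related --- the scroll is the geometric incarnation of the syzygy class --- but the paper's version is more concrete and avoids the Koszul-cohomology formalism, at the cost of treating only pencils rather than arbitrary line bundles computing the Clifford index. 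Your account of the hard direction via Voisin's K3 and Hilbert-scheme methods is accurate background the paper does not supply; the paper simply treats the vanishing half as open and proceeds computationally, which your final paragraph matches exactly.
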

\noindent
where $c$ is the Clifford index of $C$, i.e., it is the minimum over all
maps to $\PP^r$ of degree $d$ admitted by $C$ of the numbers 
\[
	d - 2r. 
\]

One direction of the conjecture is a theorem (for a more extensive explanation, 
see \cite{Eis92}):  if a curve admits a $d:1$ map to $\PP^1$ corresponding
to a line bundle $L$, then the $2\times 2$ minors of the matrix
corresponding to the multiplication map
\[
	V \otimes H^0(K_C \otimes L^{-1}) \to H^0(C,K_C) 
\]
cut out a rational normal scroll $S$ inside $\PP^{g-1} \cong
\PP(H^0(C,K_C))$ which contains $C$.  The 2-linear strand of the scroll
is a subcomplex of the 2-linear strand for $C$, and has $2LP \ge g-d$.
Thus the $2LP$ of the curve is $\ge g-d$, with $d$ minimal for $C$ --
i.e. with $d$ the so-called ``gonality'' of $C$.  Green's conjecture
(modulo the replacement of gonality with the refinement $c+2$) is that
the length is always exactly this.  Said otherwise, the number of
nonzero entries of the sequence $\mathbf{a}$ is conjectured to be
$\lfloor \tfrac{g+1}{2}\rfloor -c -1$.

\textit{Generic Green's conjecture} is a highly studied subconjecture,
asserting that there exists a curve of every genus $g \ge 3$ giving the
zero sequence $\mathbf{a} = (0,\ldots,0)$ (note the generic Clifford is 
index $c_{\text{gener}} = \lfloor \tfrac{g+1}{2}\rfloor -1$).  The
condition of having extra syzygies is closed, so if an example exists,
``almost all'' curves of that genus and characteristic must generate the
zero sequence.

\begin{remark}
Because of this, to verify generic Green's conjecture, it suffices to produce a
sequence $\mathbf{a} = (0,\ldots,0)$ coming from a singular curve, as
long at it is smoothable, i.e., it occurs in a flat family of smooth genus
$g$ curves. 
\end{remark}


\section{The REU}\label{S:REU}

The objective of our research was to explore the notions of free
resolutions and canonically embedded curves thru explicit computation,
and ultimately to illuminate Green's conjecture through experimental
evidence.  However, to even state the conjecture is complicated, so we
started more basically. 

The calculation of free resolutions is essentially elementary, being
based on Gr\"obner bases, which themselves are calculated via
polynomial long division. During the first days of the program, 
we spent writing code in the language of Macaulay2 \cite{GS} (henceforth abbreviated M2)  
to carry out polynomial long division. We
briefly studied the theory of Gr\"obner bases and syzygies for ideals using \cite[Chapter 2]{CLO97} and then extended
these notions to submodules of free modules \cite[Chapter 5]{CLO98}, 
writing algorithms designed to calculate generators of
kernels of maps of free modules.  We briefly discussed some
characteristics (e.g. minimality) of graded free modules \cite[Chapter 6]{CLO98} and then focused on the use of M2's
built-in functions. 

The next goal was to generate rings of canonical curves.  For this we
used as example \cite{Eis02}, which demonstrates the use of M2 in
calculating the canonical series of the normalization $C$ of a
singular plane curve $\Gamma$, 
given only a homogeneous
polynomial for $\Gamma$.  The result is a subring of the ring of the ambient
$\PP^2$, identifiable with the space of global regular differential
forms on $C$.  As this is the main idea behind the generation of our 
examples, we explain it more fully (see also \cite[Appendix A]{ACGH85}. 

Suppose a smooth, abstract curve $C$ of genus $g$ maps onto a
(possibly singular) plane curve $\Gamma$ of degree $d$.  Then the
entire space of global regular differential forms on $C$ can be
identified with a $g$-dimensional subspace of differential forms on
$\PP^2$, which are regular except possibly (simply) along $\Gamma$
itself.  The forms with simple poles along $\Gamma$ are identified
with homogeneous polynomials of degree $d-3$ in the coordinates of
$\PP^2$.  The subspace in question is determined by the types of
singularities of $\Gamma$. For instance, a simple node or cusp at a
point $P \in \PP^2$ requires limitation to the subspace of forms
vanishing at $P$.  Tacnodes and other more general singularities
require further conditions \cite[\#32, page 60]{ACGH85}.  If the
subspace under consideration has as basis a set of homogeneous
polynomials $\{F_0,\ldots,F_{g-1}\}$, then the subring
$k[F_1,\ldots,F_{g-1}] \subseteq k[X_0,\ldots,X_{g-1}]$ is identifiable with
the canonical ring of $C$.

The rest of our time in collaboration was spent generating canonical rings in order to
build a collection of diagrams. Details are given in the following
sections.

\section{Singularities and Adjunction} 
\label{S:singular} 
We begin by giving an example. 
\begin{example}
The following Betti diagram came from a genus 8 curve in characteristic 7:  
\begin{verbatim}
                        1  .  .  .  .  . .
                        . 15 36 33 12  1 .
                        .  1 12 33 36 15 .
                        .  .  .  .  .  . 1
\end{verbatim}
It has a two linear strand of length $g-3$, so should be trigonal, but
this diagram is not the unique diagram for trigonal genus 8 curves
(cf. \cite{Sch86}).  This would be a counterexample to Green's conjecture
(and \cite{Sch86}), except for the fact that the conjecture was made
for nonsingular curves.  The plane curve from which we produced the
diagram had singularities not defined over $k = \Z/7\Z$, and one of
its singularities over $k$ was of a type requiring stronger adjunction
conditions than were performed.
Thus the canonical model we produced by doing partial adjunction was singular. 
\end{example}

To signal such situations, we developed an M2 code to indicate when a
canonical curve was not smooth.  
The code can be obtained by contacting Mee Seong Im.  
It also contains supporting algorithms, e.g., resultants, as well as
subroutines for generating and analyzing lists of canonical rings. 
The following is a summary of the code:
a homogeneous polynomial $F$ of degree $d$ is entered (we will discuss more on how we
generated these later).  The partials $\partial F/\partial X_i$ are
calculated, and two cases can occur: 
\begin{enumerate}
\item every partial of $F$ passes through every point of $\PP^2_k$.
\item some partial does not pass through some point of $\PP^2_k$. 
\end{enumerate}
In either case, we can locate the $k$-singularities of $V(F)$ by
finding the points at which $F$ and all its partials vanish.  In the
first case, our code was not set up to say anything about the
possibility of there being other singularities of $V(F)$ not defined
over $k$.  We flag this by saying ``\verb%check1% = 4.''

In the second case, we proceed as follows:  a point $P$ over $k$ is
selected such that some derivative $\partial F/\partial X_i(P) \ne 0$.
Label the three partials $P_i$, $i = 0,1,2$, where $P_0$ is this
distinguished one.  Coordinates are changed to make $P = (1,0,0)$.
The three resultants \verb&R0&$i := \Res_{X_0}(P_0, P_i)$ ($i=1,2$) and
\verb&R0F&$ := \Res_{X_0}(P_0, F)$ are calculated, and all linear
factors defined over $k$, which they have in common, are put into a
list.  The resultant $\Res_{X_1}(\rem_{01}, \rem_{02})$ of the remainders
of \verb&R01& and \verb&R02& on dividing out all $k$-linear factors is
then calculated.  If it is identically zero and if $\Char \: k \nmid \deg
F$, then $V(F)$ has singularities not defined over $k$; we say
``\verb#check1# = 2.''  If $\Char \: k \mid \deg F$, we can conclude
nothing (this ambiguity could be resolved, e.g., by using
multipolynomial resultants on \verb&R0F& and the \verb&R0&$i$), and
say ``\verb#check1# = 1.''  If $\Res_{X_1}(\rem_{01}, \rem_{02})$ is not
identically zero, then all singularities of $V(F)$ lie on lines thru
$(1:0:0)$ which are defined over $k$, and we say ``\verb#check1# = 0.''

Next we set \verb#check2# = 0, and the (transformed) polynomial $F$
and the $P_i$ are restricted to the lines determined by $(1:0:0)$ and
the list of linear factors gathered above.  Common $k$-linear factors
are again gathered; these correspond to the subset of singular points
defined over $k$.  If the resultant of any pair of elements of
$\{\rem_{\overline F},\rem_{\overline P_0}, \rem_{\overline P_1}, \rem_{\overline P_2}\}$ (of
remainders of restrictions on dividing out all $k$-linear factors) is
not identically zero, then all singularities of $V(F)$ along this line
are defined over $k$, and we leave \verb#check2# = 0.  Otherwise, we
cannot conclude anything, and set \verb#check2# = 1.

Thus if ``\verb#check#'' is the minimum of the two checks, then we
have four cases:
\begin{enumerate}
\item \verb#check# = 0 implies all singularities of $V(F)$ are defined
over $k$.
\item \verb#check# = 1 implies there may be singularities of $V(F)$
not defined over $k$, but calculations are inconclusive.
\item \verb#check# = 2 implies there are definitely singularities of
$V(F)$ not defined over $k$.
\item \verb#check# = 4 implies the algorithm failed, either due to
lack of a point of $\PP^2_k$ not on some partial, or some other reason
(most commonly monomial overflow in the resultant calculations).
\end{enumerate}

Once the singularities have been located, they are classified
according to the following definition.
\begin{definition}
Let $P$ be a singularity over $k$ of the curve $C$, and let $f(x,y)$
be a local equation for $C$ such that $P$ corresponds to $(0,0)$.
Then $f$ decomposes into a sum of homogeneous parts $f = \sum_i
f_{(i)}$.  In this paper, a plane curve singularity will be called
\textit{admissible} if it is of one of the two following types:

\begin{enumerate}
\item a regular $m$-fold point for any $m \ge 2$.  These points are
such that $f_{(i)} \equiv 0$ for $0 \le i \le m-1$ and $f_m$
decomposes into $m$ distinct linear factors over the algebraic closure
$\overline k$ of $k$.  These are characterized by $f_{(m)}$ and its
derivative (with respect to either $x$ or $y$) having nonzero
resultant.  Such a point will be symbolized by $R_m$.

\item an $m$-th order node/cusp, i.e., $f_0 = f_1 = 0$, $f_2 = L^2$ for
some linear homogeneous $L$ defined over $k$, $L^2 \mid f_{(i)}$ for
$2 \le i \le m-1$, and $L \nmid f_m$.  Such a singularity will be
called a \textit{double point of index} $m$, and symbolized
by $D_m$.
\end{enumerate}
\end{definition}
These singularities were admitted for two reasons: 
\begin{enumerate}
\item we knew how to perform adjunction on them \cite[page 60]{ACGH85}:  an $R_n$ singularity $P$ requires restriction to homogeneous
forms $H$ of degree $\deg F-3$ which vanish to order $n-1$ at $P$ such that $H$ is in the $(n-1)$-st power of the ideal $I_P$ of $P$.
A $D_{2n}$ or $D_{2n+1}$ singularity $P$ with tangent line $L$
requires restriction to the forms in the ideal $(L) + {I_P}^n$.

\item both were necessary in generating our list of diagrams (see \S\ref{S:computations} for a further discussion).
\end{enumerate}

When a singularity was not admissible, we did adjunction as though it
were a regular singularity of order $\ord_P (f)$.  In that case,
adjunction produced only a partial normalization.  We flagged this
situation with a subscript $b$. For example, we would juxtapose the symbol
$2_{b1}$ with the sequence $\mathbf{a} = (5,0,0)$ if that sequence
were found in characteristic 2, but the $k$-singularities of $V(F)$
were not all admissible, and calculations were inconclusive as to
whether all singularities were defined over $k$.

\section{Connectedness}\label{S:connected} 

Next, we discuss a criterion for connectedness for our canonical
models.
\begin{observation}
If $P \in \PP^2_k$ is chosen so that the partial $F_{X_i}(P)$ of $F$ with
respect to some $i$ is nonzero, then $\Res_{X_0}(F,F_{X_i})$
is defined, and is identically zero if and only if $F$ is not reduced.
\end{observation}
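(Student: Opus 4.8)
The plan rests on two classical facts: the Sylvester resultant in $X_0$ of two polynomials over $k[X_1,X_2]$ vanishes identically exactly when they share a factor of positive degree in $X_0$, and such a shared factor of $F$ with one of its own partial derivatives is -- up to the subtlety noted below -- a repeated factor of $F$. We may assume, after a linear change of homogeneous coordinates (which preserves reducedness of $F$), that $P=(1:0:0)$, with $\Res_{X_0}$ formed relative to these coordinates as in \S\ref{S:singular}; we may also assume $\deg_{X_0}F\geq 1$, the contrary case being trivial (the hypothesis then forces $F$ to be a line). The condition $F_{X_i}(P)\neq 0$ now says precisely that, read as a polynomial in $X_0$ over $k[X_1,X_2]$, the form $F_{X_i}$ has degree $\deg F-1$ in $X_0$ with a nonzero scalar as its leading coefficient. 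This makes the Sylvester matrix of $F$ and $F_{X_i}$ in $X_0$, and hence $\Res_{X_0}(F,F_{X_i})\in k[X_1,X_2]$, genuinely defined -- the first assertion. Since the $X_0$-leading coefficient of $F_{X_i}$ is a unit, the degenerate branch of the resultant criterion (both $X_0$-leading coefficients vanishing) is impossible, so, passing to the fraction field $k(X_1,X_2)$ and using Gauss's lemma to bring a common factor back into $k[X_0,X_1,X_2]$, we get $\Res_{X_0}(F,F_{X_i})=0$ if and only if $F$ and $F_{X_i}$ have a common factor of positive degree in $X_0$.

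It remains to match that with non-reducedness of $F$. For ``not reduced $\Rightarrow$ resultant vanishes'': let $G$ be a repeated irreducible factor of $F$, so that $G$ also divides $F_{X_i}$; then $G$ has positive $X_0$-degree, since otherwise $G\in k[X_1,X_2]$ is homogeneous of positive degree, hence $G(1,0,0)=0$, and as $G\mid F_{X_i}$ this gives $F_{X_i}(1,0,0)=0$, against the hypothesis. Thus $G$ is the desired common factor. For the converse, suppose $F$ is reduced and that an irreducible $G$ with $\deg_{X_0}G>0$ divides both $F$ and $F_{X_i}$; writing $F=GH$ with $\gcd(G,H)=1$ (legitimate because $F$ is squarefree), the identity $F_{X_i}=G_{X_i}H+G\,H_{X_i}$ forces $G\mid G_{X_i}H$, hence $G\mid G_{X_i}$, hence $G_{X_i}=0$ for degree reasons.

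The step I expect to be the main obstacle is disposing of this last case, which really can occur: there exist irreducible $G$ with $\partial G/\partial X_i=0$ and $\deg_{X_0}G>0$ -- e.g.\ $G=X_0^2+X_1X_2$ with $i=0$ in characteristic $2$, or the linear form $G=X_0$ with $i=1$ in any characteristic -- and then $F=GH$ may be reduced while $\Res_{X_0}(F,F_{X_i})\equiv 0$, so the equivalence must be understood with $P$ chosen general. I would close the gap thus: for an irreducible form $G$ of positive degree, not all partial derivatives of $G$ vanish (over the perfect field $k$ such a $G$ is not a $p$-th power), so a coordinate permutation carries a nonzero partial of $G$ into the $X_i$-slot; hence the locus of linear coordinate changes sending $G$ to a form annihilated by $\partial/\partial X_i$ is a proper closed subvariety of $\GL_3$, and a general change of coordinates simultaneously arranges that every one of the finitely many irreducible factors of the squarefree form $F$ has nonvanishing $X_i$-derivative (and, generically, keeps $F_{X_i}(1:0:0)\neq 0$ for the relevant $i$). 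With the case $G_{X_i}=0$ now emptied, $\gcd(F,F_{X_i})$ has $X_0$-degree $0$, whence $\Res_{X_0}(F,F_{X_i})\neq 0$, completing the proof of the converse. I would also record the caveat that, without this genericity, the equivalence can fail in small characteristics -- consistent with the cautionary function this connectedness check is about to serve.
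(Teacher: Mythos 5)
The paper records this Observation without proof, so there is no argument of record to measure yours against; judged on its own terms, your analysis is correct and in fact sharper than the statement itself. Your use of the resultant criterion is the right one: after moving $P$ to $(1{:}0{:}0)$, the hypothesis $F_{X_i}(P)\neq 0$ makes the $X_0$-leading coefficient of $F_{X_i}$ a nonzero scalar, so $\Res_{X_0}(F,F_{X_i})$ is a nondegenerate resultant and vanishes identically exactly when $F$ and $F_{X_i}$ share a factor of positive $X_0$-degree; and your proof of the implication the code actually relies on (non-reduced $\Rightarrow$ resultant $\equiv 0$, so no non-reduced curve survives the test) is complete. You are also right that the converse fails verbatim: $F=X_0(X_0X_1+X_2^2)$ with $i=1$, $P=(1{:}0{:}0)$ gives $F_{X_1}=X_0^2$, hence a reduced $F$ with $\Res_{X_0}(F,F_{X_1})\equiv 0$ in every characteristic, and your $F=X_0(X_0^2+X_1X_2)$, $i=0$ does the same in characteristic $2$ -- both by exactly your mechanism of an irreducible factor $G$ with $G_{X_i}\equiv 0$ and positive $X_0$-degree. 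Two refinements to your repair: first, it is not generality of the point $P$ alone that rescues the converse but generality of the coordinate frame (the direction $\partial/\partial X_i$ and the eliminated variable $X_0$), which is what your $\GL_3$ argument actually establishes -- a factor such as $X_0^2+X_1X_2$ in characteristic $2$ is annihilated by a fixed constant vector field no matter where $P$ sits -- so the amended statement should be phrased that way; second, over the prime fields $\Z/p\Z$ used in the computations (especially $p=2$, where $\PP^2(k)$ has only seven points) a ``general'' choice need not exist over $k$, so the genericity fix should be stated over an infinite field or with coefficient extensions allowed. Neither caveat affects the paper's tables, since the failure direction only discards some reduced curves, but your observation that only one implication is unconditional is worth recording next to the statement.
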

Our code was written to abandon non-reduced curves.  

\begin{definition} 
Let $F$ be a reduced homogeneous polynomial of degree $f$ over some
field $k$.  The sub-intersection order $\subOrd_P(C)$ of a point $P$ on
$C := V(F)$ is the maximum over all intersection numbers
$I_P(V(H), V(K))$, where $H$ and $K$ are homogeneous of degrees $h$ and $k$, respectively, 
where $h+k = f$, and where the singularity $P \in V(HK)$ has
the same analytic type over $\overline k$ as that of $P \in V(F)$.
\end{definition}

\begin{example}
We have 
\[
	\subOrd_P(C) = 
\begin{cases}
	0 &\text{if $P$ of type $D_m$ and $m$ is odd}, \\
	\tfrac{m}{2} &\text{if $P$ of type $D_m$ and $m$ is even},\\
	\lfloor\tfrac{\ord_P(C)^2}{4}\rfloor &\text{if $P$ of type $R_m$}. 
\end{cases}
\]
\end{example}

If $F$ is reduced of degree $f$, then for $F$ to factor over $\overline k$
as $HK$, B\'ezout requires
\begin{equation}\label{E:necRed}
	hk \le \sum_{P \in \sing (V(F))} \subOrd_P(V(F)). 
\end{equation}

\begin{definition}
For $P \in V(F)$, let $\delta(P)$ be the amount by which the point $P$
drops the genus of $V(F)$. More precisely, $\delta(P)$ is the
dimension of the conditions on polynomials of degree $f-3$
imposed by the singularity in order to do adjunction.
\end{definition} 
\begin{example}
For $m \ge 2$, 
\[
	\delta(P) = 
\begin{cases}
	\lfloor \tfrac{m}{2}\rfloor &\text{if $P$ of type $D_m$},\\
	\binom{m}{2} &\text{if $P$ of type $R_m$}.
\end{cases}
\]
\end{example} 

Let $\delta := \sum_P \delta(P)$.  Then if $C$ has only admissible
singularities,
\begin{equation}\label{E:delvsSub}
	\sum_P \subOrd_P(C) \le \delta,
\end{equation}
with equality only if $C$ has no cusps or regular $R_m$-fold points
with $m > 2$.  We label 
\[
	g := \binom {f-1}{2} - \delta
\]
the geometric genus of the normalization of $V(F)$.  We thus have the following:  
\begin{proposition}\label{P:irred}
Suppose a plane curve $V(F)$ whose normalization has geometric genus
$g$ is specified by a homogeneous polynomial $F$ of degree $f$, and
$F$ factors as $F = HK$, where $h := \deg H \le k := \deg K$.  Then
\[
	h(f-h) \le \tfrac{1}{2}(f-1)(f-2)-g. 
\]
\end{proposition}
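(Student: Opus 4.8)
The plan is to obtain the inequality by concatenating three facts that are already in hand: the B\'ezout bound \eqref{E:necRed}, the comparison \eqref{E:delvsSub} between the sub-intersection orders and the adjunction defect $\delta$, and the defining relation $g = \binom{f-1}{2}-\delta$ for the geometric genus. In other words, the proposition is essentially a formal consequence of material already established, and the proof should be short.

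First I would note that $F$ is reduced (non-reduced curves having been discarded by the code, per the Observation of \S\ref{S:connected}), so a factorization $F = HK$ over $\overline k$ has $H$ and $K$ without a common component. Hence B\'ezout applies and produces exactly $hk = h(f-h)$ intersection points of $V(H)$ and $V(K)$ counted with multiplicity, each of which is a singular point of $V(HK) = V(F)$. At such a point $P$, the analytic type of $P$ on $V(HK)$ agrees with that of $P$ on $V(F)$, so the given factorization is one of the competitors appearing in the definition of $\subOrd_P(V(F))$; thus $I_P(V(H),V(K)) \le \subOrd_P(V(F))$ termwise. Summing over $P$ recovers precisely \eqref{E:necRed}, namely $h(f-h) \le \sum_{P \in \sing(V(F))} \subOrd_P(V(F))$. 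Then, since the curves under consideration have only admissible singularities, \eqref{E:delvsSub} bounds the right-hand side by $\delta = \sum_P \delta(P)$, and substituting $\delta = \binom{f-1}{2} - g = \tfrac12(f-1)(f-2) - g$ yields $h(f-h) \le \tfrac12(f-1)(f-2)-g$, as claimed.

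There is no deep obstacle here; the only care required is bookkeeping. One must be sure the B\'ezout count is legitimate, which is exactly where reducedness of $F$ (equivalently, coprimality of $H$ and $K$) is used, and one must be sure \eqref{E:delvsSub} is applicable, i.e. that every singularity of $V(F)$ is of type $R_m$ or $D_m$ so that the tabulated values of $\subOrd_P$ and $\delta(P)$ in the Examples of \S\ref{S:connected} are in force. If some singularity were inadmissible the bound would still go through once $\delta(P)$ is read as the defect actually imposed by the adjunction performed, but the clean statement is for admissible singularities, and that is the standing hypothesis. (The name of the proposition reflects the intended use: if $h(f-h) > \tfrac12(f-1)(f-2)-g$ for every $1 \le h \le f/2$, then no such factorization can exist and $F$ must be irreducible.)
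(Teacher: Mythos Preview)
Your proof is correct and follows exactly the route the paper intends: the proposition is stated immediately after \eqref{E:necRed}, \eqref{E:delvsSub}, and the definition $g = \binom{f-1}{2}-\delta$, with the phrase ``We thus have the following'' in place of a proof, so the paper treats it as the formal concatenation you describe. Your added remarks on reducedness of $F$ and on the implicit admissibility hypothesis for \eqref{E:delvsSub} are accurate and make explicit what the paper leaves tacit.
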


\begin{corollary}
If $V(F)$ is a plane curve with only admissible singularities, and if
$F = HK$ with $h \le k$, then $h$ can be no larger than the numbers
indicated in the following table:
\end{corollary}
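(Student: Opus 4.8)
The plan is to obtain the inequality by concatenating three facts already recorded in this section: the B\'ezout bound \eqref{E:necRed}, the comparison \eqref{E:delvsSub} of sub-intersection orders with the adjunction defect, and the definition $g=\binom{f-1}{2}-\delta$ of the geometric genus of the normalization. (As in \eqref{E:necRed}, we use the standing convention that $F$ is reduced, so that the factors $H$ and $K$ are coprime.)

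First I would rewrite the left-hand side: since $F=HK$ has degree $f$ we have $h+k=f$, hence $k=f-h$, so the quantity to be estimated is exactly $h(f-h)=hk$. Because $H$ and $K$ are coprime, $V(H)$ and $V(K)$ meet in finitely many points, each of which is singular on $V(F)=V(HK)$, and the factorization $F=HK$ is itself one of the competitors in the maximum defining $\subOrd_P\bigl(V(F)\bigr)$; hence $I_P\bigl(V(H),V(K)\bigr)\le\subOrd_P\bigl(V(F)\bigr)$ at each such point, and summing (with B\'ezout on the left) recovers \eqref{E:necRed}:
\[
h(f-h)=hk=\sum_P I_P\bigl(V(H),V(K)\bigr)\le\sum_{P\in\sing(V(F))}\subOrd_P\bigl(V(F)\bigr).
\]
Then I would invoke \eqref{E:delvsSub}, which (valid for the admissible singularities assumed in this section) gives $\sum_P\subOrd_P\bigl(V(F)\bigr)\le\delta$, and finally substitute $\delta=\binom{f-1}{2}-g=\tfrac12(f-1)(f-2)-g$ from the definition of $g$. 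Chaining the displays gives $h(f-h)\le\tfrac12(f-1)(f-2)-g$, as claimed.

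Since the two substantive inputs \eqref{E:necRed} and \eqref{E:delvsSub} are already available, this argument is essentially a one-line concatenation, and the only minor points needing care are the bookkeeping $h(f-h)=hk$ and the remark that our factorization is admissible in the sense of the $\subOrd$ definition; I do not anticipate a real obstacle. If instead one wanted a self-contained proof, the genuine work would be to re-establish \eqref{E:delvsSub}. I would do this pointwise, bounding $\subOrd_P(C)$ by $\delta(P)$ at each admissible singularity using the values already tabulated: for a point of type $D_m$ one compares $\subOrd_P(C)\in\{0,\tfrac m2\}$ with $\delta(P)=\lfloor\tfrac m2\rfloor$, and for a point of type $R_m$ one compares $\subOrd_P(C)=\lfloor\tfrac{\ord_P(C)^2}{4}\rfloor$ with $\delta(P)=\binom m2$, each reducing to an elementary estimate such as $(m-1)^2\ge0$ and simultaneously pinning down when equality holds (no cusps, no regular $R_m$-fold points with $m>2$). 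A cleaner alternative avoids the case analysis entirely via the classical additivity $\delta\bigl(V(HK)\bigr)=\delta\bigl(V(H)\bigr)+\delta\bigl(V(K)\bigr)+I_P\bigl(V(H),V(K)\bigr)$ for a local splitting into parts with no common branch, which bounds $I_P\bigl(V(H),V(K)\bigr)$ by $\delta(P)$ directly. Either route feeds into the chain above to complete the proof.
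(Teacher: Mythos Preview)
Your argument is correct and is precisely the paper's approach: Proposition~\ref{P:irred} is introduced with the words ``We thus have the following,'' meaning the inequality is simply the concatenation of \eqref{E:necRed}, \eqref{E:delvsSub}, and $\delta=\binom{f-1}{2}-g$ that you have written out. Note only that what you have actually proved is the Proposition rather than the Corollary as stated: the Corollary is the \emph{table}, so to finish you should add a sentence saying that for each pair $(f,g)$ one computes $\tfrac12(f-1)(f-2)-g$ and records the largest integer $h\le f/2$ with $h(f-h)$ not exceeding that value (the entries ``$-$'' and ``$*$'' mark cells where even $h=1$ violates the bound, respectively where $g>\binom{f-1}{2}$ so no such plane model exists).
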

\begin{table}[ht]
\begin{tabular}{|r||c|c|c|c|c|c|c|c|c|c|c|}
\hline
      &$g=4$& 5 & 6 & 7 & 8 & 9 & 10 & 11 & 12 & 13 & 14  \\ \hline
$f=5$ &   - & - & - &$*$&$*$&$*$& $*$& $*$& $*$& $*$& $*$ \\ \hline 
   6  &   1 & 1 & - & - & - & - &  - & $*$& $*$& $*$& $*$ \\ \hline 
   7  &   2 & 2 & 1 & 1 & 1 & 1 &  - &  - &  - &  - &  -  \\ \hline 
   8  &   4 & 4 & 3 & 2 & 2 & 2 &  1 &  1 &  1 &  1 &  1  \\ \hline 
   9  &   4 & 4 & 4 & 4 & 4 & 3 &  3 &  2 &  2 &  2 &  2  \\ \hline 
  10  &   5 & 5 & 5 & 5 & 5 & 5 &  5 &  5 &  4 &  3 &  3  \\ \hline 
\end{tabular}
\end{table}
Happily, our ``cases of interest'' coincide with the cases in the above
table having entries $\le 2$.  
That is, all computations fell under one of
the following sets of constrants:   
\begin{align}\label{E:coi}
f &\le 7 \quad \text{and} \quad g \ge 4,\\\notag
f &= 8 \quad \text{and} \quad g \ge 7,\\
f &= 9 \quad \text{and} \quad g \ge 11.\notag
\end{align}
In these cases, we have the following result: 
\begin{corollary}
In our cases of interest (\ref{E:coi}), a plane curve with only
admissible singularities can be reducible only if it contains a line
or conic defined over $k$.
\end{corollary}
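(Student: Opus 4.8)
The plan is to deduce the statement directly from Proposition~\ref{P:irred} and the table in the preceding Corollary, supplemented by a short Galois–descent argument to control the field of definition. So let $V(F)$ be a plane curve with only admissible singularities, write $f=\deg F$, let $g$ be the geometric genus of its normalization, and assume $(f,g)$ falls under one of the three sets of constraints in \eqref{E:coi}. Suppose $V(F)$ is reducible; we must produce a line or conic in $V(F)$ defined over $k$.

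First I would bound the degree of a factor. If $F=HK$ is any nontrivial factorization, over $k$ or over $\overline{k}$, with $1\le h:=\deg H\le\deg K$, then Proposition~\ref{P:irred} gives
\[
	h(f-h)\le\tfrac{1}{2}(f-1)(f-2)-g,
\]
which is exactly the inequality whose solutions $h$ are recorded in the Corollary's table. Since, as already remarked, the cases of interest \eqref{E:coi} are precisely those for which the tabulated bound on $h$ is at most $2$, this forces $h\le 2$.

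Next I would descend to $k$. If $F$ is reducible already over $k$, apply the previous step to a factorization $F=HK$ with $H,K\in k[X_0,X_1,X_2]$ and $\deg H\le\deg K$; then $V(H)\subseteq V(F)$ is a line (if $\deg H=1$) or a conic (if $\deg H=2$) defined over $k$, and we are done. It remains to exclude the possibility that $F$ is irreducible over $k$ but reducible over $\overline{k}$. In that case, since $F$ is reduced, its geometric irreducible factors form a single $\mathrm{Gal}(\overline{k}/k)$-orbit $H_1,\dots,H_m$ with $m\ge 2$ and all $\deg H_i$ equal to $d_0:=f/m$; applying the bounding step to $F=H_1\cdot(H_2\cdots H_m)$, which is legitimate because $d_0\le f-d_0$ when $m\ge 2$, forces $d_0\le 2$, so each $V(H_i)$ is a line or an irreducible, hence smooth, conic. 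Then the normalization of $V(F)$ is a disjoint union of $m\ge 2$ rational curves, and $g$, which by definition equals $\binom{f-1}{2}-\delta$ and hence equals the arithmetic genus of that normalization, is $1-m\le -1$. This contradicts $g\ge 4$, so this case cannot occur, which completes the proof.

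The only genuine content beyond quoting Proposition~\ref{P:irred} and the table lies in this last case: one has to guarantee that when $V(F)$ is reducible there is a component of degree $\le 2$ that is actually \emph{defined over $k$}, and the step above does this by showing that two or more conjugate components of degree $\le 2$ would drop the geometric genus far below the thresholds $g\ge 4$, $g\ge 7$, $g\ge 11$ imposed in \eqref{E:coi}. The delicate point is the identification of $\binom{f-1}{2}-\delta$ with the arithmetic genus of the normalization, i.e.\ the fact that the total $\delta$-invariant depends only on the normalization and not on how the conjugate pieces are positioned, and this is exactly where the admissible-singularity bookkeeping of \S\ref{S:connected} is needed. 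I expect that verification to be the main obstacle; the remainder is a direct appeal to the two previous results.
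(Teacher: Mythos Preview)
Your argument is correct, but it is genuinely different from the paper's. Both proofs begin the same way: Proposition~\ref{P:irred} together with the table forces any factor to have degree at most $2$. From there the paper argues \emph{geometrically} that such a component is defined over $k$: a line $V(H)$ must meet $V(K)$ in at least two distinct singular points of $V(F)$ (since at an $R_n$ point the intersection number is only $n-1<f-1$, and $V(F)$ is not a union of lines), and these singular points are $k$-points, so the line is defined over $k$; for a smooth conic one checks, using the numerical constraints of~\eqref{E:coi}, that the admissible singularities it passes through impose five independent $k$-conditions, pinning it down over $k$. Your approach instead sidesteps this case analysis with a Galois/genus argument: if $F$ had no $k$-factor, its $\overline{k}$-irreducible factors would form a single Galois orbit of $m\ge 2$ curves of degree $\le 2$, all rational, forcing $g=\binom{f-1}{2}-\delta=p_a(\tilde C)=1-m<0$, which is absurd. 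Your route is shorter and avoids the somewhat delicate count of conditions on conics; the paper's route, on the other hand, explains \emph{why} the low-degree component is $k$-rational, namely because it is nailed down by the $k$-singularities it is forced to pass through.

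One small comment: your closing worry that the identification $\binom{f-1}{2}-\delta=p_a(\tilde C)$ might depend on ``how the conjugate pieces are positioned'' is overcautious. The quantity $\delta=\sum_P\delta(P)$ is a sum of local invariants, and for admissible singularities the formulas in \S\ref{S:connected} agree with the usual local $\delta$-invariant $\dim_k(\widetilde{\mathcal O}_P/\mathcal O_P)$; the identity $p_a(\tilde C)=p_a(C)-\sum_P\delta_P$ then holds for any reduced plane curve, reducible or not. So once you know all singularities are admissible (hence $k$-rational with the stated $\delta(P)$), the genus drop is automatic and no extra bookkeeping is needed.
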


\begin{proof}
As a singular conic must be a union of lines, we need only consider
the cases of a line and an irreducible (nonsingular) conic.  Our cases
of interest exclude the possibility of $V(F)$ being a union of lines.

Suppose first that $\deg H$ = 1, so that $V(H)$ must have intersection
number $f-1$ with $V(K)$.  $V(H)$ cannot be part of a singularity of
type $D_{k}$; its intersection number with $V(K)$ at an $R_n$
singularity is $n-1$.  Since $V(F)$ is not a union of lines, $n < f$,
so $V(H)$ must pass through two distinct points over $k$, and so is
definable over $k$.

Similarly, suppose $\deg H = 2$, so the intersection number of $V(H)$
with $V(K)$ is $2(f-2)$, and suppose $V(H)$ is a smooth conic.  We
will show that the singularities of $V(F)$ contained in $V(H)$ impose
five independent conditions over $k$ so that $V(H)$ is definable over
$k$.  Note first that any regular $n$-fold point over $k$ imposes one
condition over $k$, and causes $V(H)$ to have intersection number
$n-1$ with $V(K)$.  If $V(H)$ forms part of a singularity of type
$D_k$, then $k=4$, and two conditions (the point and a tangency
condition) are imposed. In this case $V(H)$ has intersection number
$2$ with $V(K)$.  For the following purposes we can think of a $D_4$
as being equivalent to two $R_2$'s.

So suppose there are four or fewer singularities of $V(F)$ through
which $V(H)$ is passing, say of type $R_{n_i}$, $i = 1,\ldots,4$.  Then
we must have
\begin{equation}\label{E:condn}
	\sum_{i=1}^4 (n_i-1) \ge 2(f-2), \qquad 
\text{i.e.}, \qquad \sum_{i=1}^4 n_i \ge 2f.  
\end{equation}
Let $\delta_0 := \sum_{i=1}^4 \binom{n_i}{2}$.  Then our cases of
interest (\ref{E:coi}) impose the added conditions
\begin{center}
\begin{tabular}{|c|c|c|c|c|c|c|}
\hline
$f =       $ & 4 & 5 & 6 &  7 &  8 &  9\\ \hline
$\delta_0 \le$ & 0 & 2 & 6 & 11 & 14 & 17 \\\hline  
\end{tabular}, 
\end{center}
which together with (\ref{E:condn}) deny any possibility.  
%
%
%
%
%
\end{proof}

\begin{corollary}\label{C:connected}
In our cases of interest (\ref{E:coi}), if $F$ has only admissible
singularities and is irreducible over $k$, then its normalization is
connected.  Even if $V(F)$ has singularities not defined over $k$ (but
its $k$-singularities are admissible), the partial normalization at
$k$-singularities is connected.
\end{corollary}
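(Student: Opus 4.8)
The plan is to deduce connectedness of the normalization from irreducibility of $V(F)$ over $k$, using the classification of reducible curves already obtained. First I would recall the standard fact that the normalization $\widetilde{V(F)}$ is connected if and only if $V(F)$ is connected as a scheme and is geometrically irreducible — more precisely, that the connected components of the normalization correspond to the irreducible components of $V(F)$ over $\overline k$. So the task reduces to ruling out the possibility that $F$, while irreducible over $k$, factors nontrivially over $\overline k$; if $F = \prod_j F_j$ over $\overline k$ with the $F_j$ forming a single Galois orbit, then each $F_j$ has the same degree $h$ and $V(F)$ has at least two geometric components.

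The key step is that such a geometric factorization is exactly what the preceding corollary forbids in our cases of interest. If $F$ factors over $\overline k$ as a product of $\ge 2$ conjugate factors, group them into $H$ and $K = F/H$ with $h := \deg H \le k := \deg K$; then $F = HK$ over $\overline k$ with $1 \le h \le k$, and $V(F)$ has only admissible $k$-singularities by hypothesis. By the corollary, $V(F)$ must then contain a line or conic defined over $k$ — but a line or conic defined over $k$ is a factor of $F$ defined over $k$, contradicting irreducibility of $F$ over $k$ (here one uses that $V(F)$ is not a union of lines in our cases of interest, so the line/conic is a proper factor). Hence no geometric factorization exists, $V(F)$ is geometrically irreducible, and its normalization is connected.

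For the second assertion, about the partial normalization when $V(F)$ has singularities not defined over $k$, I would argue the same way: the partial normalization $C'$ obtained by resolving only the $k$-singularities is still a partial normalization of the geometrically irreducible curve $V(F)$, and any partial normalization of a geometrically irreducible (hence connected) curve is connected, since normalizing can only separate branches at points being resolved and here the underlying curve was already irreducible over $\overline k$. Concretely, $C' \to V(F)$ is a finite birational morphism from a reduced curve, and $V(F)$ being irreducible over $\overline{k}$ forces $C'$ to be irreducible over $\overline{k}$ as well, so in particular connected.

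I expect the main obstacle to be making precise the relationship between irreducibility of $F$ over $k$ and the nonexistence of a \emph{geometric} factorization into equal-degree conjugate pieces — that is, verifying that any $\overline k$-factorization of an $F$ that is irreducible over $k$ arises from a single Galois orbit of equal-degree factors, so that it can be repackaged as $F = HK$ with $h \le k$ in a way compatible with the hypotheses (admissibility of $k$-singularities, exclusion of unions of lines) needed to invoke the corollary. Once that bookkeeping is in place, the rest is a direct appeal to the already-proved corollary plus the standard dictionary between components of a curve and components of its (partial) normalization.
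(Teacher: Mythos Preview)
Your argument for the first assertion is correct and matches the paper's (implicit) reasoning: the preceding corollary shows that any factorization $F=HK$ over $\overline k$ forces a line or conic defined over $k$ to lie in $V(F)$, contradicting irreducibility of $F$ over $k$; hence $V(F)$ is geometrically irreducible and its normalization is connected. Your worry about packaging the $\overline k$-factors into a single Galois orbit of equal degree is unnecessary --- the preceding corollary and Proposition~\ref{P:irred} apply to \emph{any} nontrivial factorization $F=HK$ over $\overline k$, so you may simply take $H$ to be any single irreducible $\overline k$-factor of minimal degree.

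For the second assertion there is a gap. You again invoke geometric irreducibility of $V(F)$, but the preceding corollary requires that $V(F)$ have \emph{only} admissible singularities, and admissible singularities are by definition singularities over $k$. When $V(F)$ has singularities not defined over $k$, that hypothesis fails: in the proof of the preceding corollary, the line or conic component is shown to be defined over $k$ precisely because the singular points of $V(F)$ through which it passes are over $k$. With non-$k$ singularities present, a geometric line or conic component could pass through those instead, and the over-$k$ conclusion no longer follows. So you cannot deduce geometric irreducibility this way.

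The repair is simpler and does not use the preceding corollary at all. Since $F$ is irreducible over $k$, the curve $V(F)$ is integral as a $k$-scheme and its function field is a field. The partial normalization at the $k$-singularities is a $k$-scheme sitting between $V(F)$ and its full normalization, birational to $V(F)$, with the same function field; hence it is itself integral over $k$, and in particular connected. That is all the second sentence claims.
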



\begin{remark}
To check whether the normalization of a plane curve satisfying the
hypotheses of Proposition~\ref{P:irred} is connected, it thus suffices to check
for factorizability of $F$ over $k$.  M2's \verb@isPrime@ command does
this.
\end{remark}

\section{Computations, Expectations, and Results}\label{S:computations} 

Other than the data of the characteristic of $k$, two pieces of information were
necessary to generate our plane curves:
\begin{enumerate}
\item which points of $\PP^2(k)$ should we single out? 
\item what condition at each of these points should we impose?  
\end{enumerate} 
Once these decisions were made, we had M2 generate a ``random''
polynomial satisfying the prescribed constraints.~\footnote{It is
important to set a new random seed before each calculation (or M2 will
generate the same results repeatedly).  Our code uses the date/time
as random seed.}  This polynomial was a choice of element of the
linear system of all suitable curves.  Most of the time we expected to
get a ``generic'' such element by randomness, but (especially in
characteristics 2 and 3, when there was a dearth of choices for the
coefficients) sometimes running repeated assays with the same
constraints gave varying diagrams or (more often) allowed us to find a
nonsingular model for a diagram discovered using a singular curve.

Differences in diagrams most often were due to differences in the
number and type of singularities, so we concentrated on categorizing
these combinations.  To denote the possibilities, we use products of
$R_m$'s and $D_n$'s, e.g., the string $R_2^3R_4D_4$ implies the curve
had three regular nodes, a regular quadruple point, and a regular
tacnode (a double point of index 4 in the terminology of
\S\ref{S:singular}).  The degree of the plane curve is recoverable
from the genus of the canonical curve and the singularity combination.
In this notation we found that every possible diagram for curves of
genera $4 \le g \le 8$ for all prime characteristics
$2-11$ came from one of the configurations
\[
R_2^{i} \:\:\mbox{ for }\:\:0\leq i\leq 8,
\quad R_3, \quad R_2^4R_3, \quad
\text{or} \quad R_2R_4 
\]
(the generic diagram in genus 7 for characteristic 2 was an exception, which we will discuss more later). 
All possible diagrams in all possible
characteristics were classified in \cite{Sch86} for $g \le 8$.
Table~\ref{Ta:gle8} summarizes the sequences $\mathbf{a}$, together
with some of the combinations which worked to produce them.
\begin{table}[ht]
\begin{tabular}{|| c | r | l | l |}
\hline
$g$ & seq. $\mathbf{a}$ & Sings \\ \hline\hline 
4 & () & $R_2^2,D_4,R_2^6$\\ \hline\hline
5 & (0) & $R_2^5,R_2^3D_4,R_4D_4^5$\\ \hline
5 & (2) & $R_2$\\ \hline\hline
6  & (0) & $R_2^4$\\ \hline
6  & (3) & $\{\}$\\ \hline\hline
7  & (0,0) & $R_2^8,R_2^{11}R_3, R_2^2R_3^3D_6$\\ \hline
7  & *(1,0)  & $R_2D_4^2D_6 \: (\CHAR 2)$\\ \hline
\end{tabular}\quad
\begin{tabular}{|| c | r | l | l |}   
\hline 
$g$ & seq. $\mathbf{a}$ & Sings \\ \hline\hline 
7  & (3,0) & $R_2^8, R_2^{14}, R_2^5R_3$\\ \hline
7  & (9,0) & $R_2^3, R_2D_4, R_2^2R_4^2$\\ \hline
7  & (15,4) & $R_3$ \\ \hline\hline
8  & (0,0) & $R_2^7, R_2D_6^2, R_2^{11}D_3^2$\\ \hline
8  & (4,0) & $R_2^4R_3, R_2D_6^2, D_4, R_2^{13}$\\ \hline
8  & (14,0) & $R_2^2, D_4, R_2R_3^2$\\ \hline
8  & (24,5) & $R_2R_4, R_3R_5, R_2D_6^4$\\ \hline 
\end{tabular}
\caption{Genus $g \le 8$} \label{Ta:gle8} 
\end{table}
%
%
%
%
%

Emboldened, we began a categorization of configurations of regular
points for $9 \le g \le 11$ given $\deg F \le 8$.  We determined that
there are 34 allowable combinations:
\begin{description}
\item[Deg 6] $\{\}$, $R_2$, 

\item[Deg 7] $R_4$, $R_3^2$, $R_3R_2^3$, $R_2^6$, 
$R_3R_2^2$, $R_2^5$, $R_3R_2$, $R_2^4$, 

\item[Deg 8] $R_5R_2^2$, $R_4^2$, $R_4R_3^2$, $R_4R_3R_2^3$,
$R_4R_2^6$, $R_3^4$, $R_3^3R_2^3$, $R_3^2R_2^6$, $R_3R_2^9$,
$R_2^{12}$, $R_5R_2$, $R_4R_3R_2^2$, $R_4R_2^5$, $R_3^3R_2^2$,
$R_3^2R_2^5$, $R_3R_2^8$, $R_2^{11}$, $R_5$, $R_4R_3R_2$, $R_4R_2^4$,
$R_3^3R_2$, $R_3^2R_2^4$, $R_3R_2^7$, $R_2^{10}$.
\end{description}
Of these, 17 involve four or fewer points.  In these cases there are
at most three distinct configurations modulo $\PGL_3(k)$ (given the
constraints on the degree of $F$).  In fact we can always assign the
$n \le 4$ points to the first $n$ of $\{(1,0,0)$, $(0,1,0)$,
$(0,0,1)$, $(1,1,1)\}$ except in the presence of collinearity.  In
the following, we describe the exceptional configurations in analogy
to the following examples:
\[
\begin{matrix}
2 & 2\\
3 & 2
\end{matrix}
\]
denotes the placement of a triple point at $(1,0,0)$ and nodes at the
points $(0,1,0)$, $(0,0,1)$ and $(1,1,1)$.  
Next, 
\[
\begin{matrix}
2 &   &  \\
3 & 2 & 2
\end{matrix}
\]
denotes the same configuration up to the last node, which is not
placed at $(1,1,1)$, but along the line generated by $(1,0,0)$ and
$(0,1,0)$ (without loss of generality, this can be taken to be $(1,1,0)$ up to projective
transformation).  
The list of exceptional configurations (beyond the
default choice) is:
\[
\begin{matrix}
2 &&\\
3&2&2
\end{matrix}, \quad\:
\begin{matrix}
3 & & \\
2 & 2 & 2
\end{matrix}, \quad\:
\begin{matrix}
3 & & \\
4 & 2 & 2
\end{matrix}, \quad\:
\begin{matrix}
4 & & \\
3 & 2 & 2
\end{matrix}, \quad\:
\begin{matrix}
3 & & \\
3 & 3 & 2
\end{matrix},\quad\:
\begin{matrix}
3 & 2 & 2
\end{matrix},\quad\:
\begin{matrix}
2 &   &  \\
2 & 2 & 2.
\end{matrix} 
\]
Via this list, we generated examples for every possible configuration
up to $\PGL$ for allowable regular singularity combinations of four or
fewer points.

Four of the singularity combinations above involve exactly five
points:  $R_2^5$, $R_4R_3R_2^3$, $R_3^3R_2^2$ and $R_4R_2^4$.  To deal
with the possible configurations, note that in any case degree
restrictions deny any four to be collinear, so there are two kinds of
cases up to $\PGL$ equivalence:
\begin{enumerate}

\item four points including all non-ordinary nodes can be situated at
$\{(1,0,0)$, $(0,1,0)$, $(0,0,1)$, $(1,1,1)\}$, and the fifth can be
any other point of $\PP^2(k)$ (all possibilities must be considered), 

\item all points lie on two intersecting lines, with one of the points
at the point of intersection.  Diagramatically, the exceptional cases
are one of the following:
\[
\begin{matrix}
2&&\\
2&&\\
2&2&2
\end{matrix},\qquad\quad 
\begin{matrix}
2&&\\
4&&\\
2&3&2
\end{matrix},\qquad\quad 
\begin{matrix}
2&&\\
3&&\\
3&3&2
\end{matrix},\qquad \quad 
\begin{matrix}
2&&\\
2&&\\
2&4&2
\end{matrix},\qquad \quad 
\begin{matrix}
2&&\\
2&&\\
4&2&2
\end{matrix}.
\]
\end{enumerate}

There are four singularity combinations involving six points:
$R_2^6$, $R_3^3R_2^3$, $R_4R_2^5$, \text{and} $R_3^2R_2^4$.  Degree
considerations require at least three points (including all higher
order regular points) to be non-collinear, so without loss of generality, they can be taken
to be $\{(1,0,0),(0,1,0),(0,0,1)\}$.  Then, either there exists a
fourth point which is not collinear with any of these, or else we are
in the situation of one of the following exceptional configurations:
\[
\begin{matrix}
2&&\\
\boxed{2}&2&\\
2&2&2
\end{matrix},\qquad\quad 
\begin{matrix}
3&&\\
\boxed{2}&2&\\
3&2&3
\end{matrix},\qquad\quad 
\begin{matrix}
2&&\\
\boxed{2}&2&\\
3&2&3
\end{matrix},\qquad \quad 
\begin{matrix}
2&&\\
\boxed{2}&2&\\
4&2&2
\end{matrix},\qquad \quad 
\begin{matrix}
2&&\\
\boxed{2}&&\\
2&&\\
2&2&4
\end{matrix}.
\]
Here the boxed number is allowed to vary along the indicated line.
Without loss of generality, the non-varying points can be taken to be $(1,0,0)$, $(1,1,0)$,
$(0,1,0)$, $(1,0,1)$, $(0,1,1)$, and $(0,0,1)$.

\begin{remark}
Any diagram coming from an exceptional configuration also came from a
``generic'' one, when one existed.  Thus the exceptional
configurations contributed nothing new.  
\end{remark}

The remaining singularity combinations are of one of the forms:
$R_4R_2^6$, $R_3R_2^{i}$ ($i=5,6$), $R_3R_2^{j}$ ($j=7,8,9$), or $R_2^{k}$ ($k=10,11,12$).  During
continuing work after our summer research (in lieu of an exhaustion of all
equivalence classes of configurations for these combinations), we
generated lists of randomly chosen points and automated M2 to
search on its own.  We were able to examine on the order of 10,000
curves in this way.  The results of these calculations are given as
the sequences in Tables~\ref{Ta:g9}--\ref{Ta:g12} which are not preceded by
asterisks.  

We found it necessary in characteristic 2, genus 7 to use non-regular
singularities to get the generic diagram, which is different than the
generic diagram for characteristic $p \ne 2$.  The generic diagrams
for the other characteristics in this genus were all found using at least
8 points at which to specify regular singularities.
Unfortunately the number of points of $\PP^2_{k}$ for $k=\Z/2\Z$ is
only 7 (we could circumvent this issue by allowing
coefficients in extensions of $k$; we were not able to get our code to
accommodate these, but this seems like good material for a sequel). 

Exotic double points are particularly useful in such situations:  a
singularity with local equation analytically equivalent to $y^2 = x^N$
reduces the genus by $\lfloor \tfrac{N}{2} \rfloor$, but affects
gonality in the same way as a regular double point.  
Describing an algorithm to categorize configurations and types
of singularities including exotic double points might thus make an
interesting project. Due to time constrants, we however did little in
this direction.


Tables \ref{Ta:g9}--\ref{Ta:g12} list our results (the meanings of the
subscripts on the characteristics are discussed in
\S\ref{S:singular}).  All entries in these tables came from curves
which were irreducible over $k$ according to M2's \verb#isPrime#
command. By Proposition \ref{C:connected}, the curves in the tables
indicated by characteristics without the subscript $b$ are connected.
Sequences preceded by asterisks were not obtained using only regular
singularities.  The existence of sequences with no nonzero entries
verifies Generic Green for the corresponding genus and characteristics
(see \S\ref{S:genericGreen} for more discussion).

\begin{table}[ht]
\begin{tabular}{||r| c | l |}
\hline
sequence $\mathbf{a}$ & char & sings\\ \hline\hline 
(0, 0, 0)  & $2_1,5,7,11$ & $R_2^{12}, D_6^4, R_2R_3, D_4D_6^2, R_2^3D_6^3, D_4^3D_6^2$\\ \hline
(4, 0, 0)  & $2,5,7,11$   & $R_2^{12}, R_2R_3D_4^4,R_2R_3D_4D_6^2$\\ \hline
*(6, 0, 0)  & $3$		  & $D_6^4, R_2^{10}D_3^2$\\ \hline  
(8, 0, 0)  & $2,3,5,7,11$ & $R_2^{12}, R_2^6R_3^2, D_6^4, R_2^2R_3^2D_4^2, D_3D_4^4D_6,R_3^2D_6D_7$\\ \hline
*(10, 0, 0) & $3$ 			& $R_2^6D_4^3, R_2^4R_3^2D_5$\\ \hline 
(12, 0, 0)  & $2,3,5,7,11$ & $R_2^9R_3, R_2^6R_3^2, R_2R_3D_4^4, R_3D_6^3, R_2R_3D_4D_6D_7$\\ \hline 
(24, 0, 0)  & $2,3,5,7,11$ 	& $R_2, R_2^6, D_6^2$ \\ \hline
(24, 5, 0)  & $2,3,5,7,11$ 	& $R_2^6R_4, R_2^{12}, R_2^2D_4^2D_6^2, R_4D_6^2$\\ \hline 
*(28, 5, 0) & $3_2, 3_b, 7_b$ & $R_2^9D_3^3, R_2^2D_4^2D_6^2, R_2^3D_6^3$\\ \hline 
(44, 5, 0)  & $2,3,5,7,11$ 	& $R_3^4, R_2^3R_3R_4, R_2R_3D_4, R_3D_6$\\ \hline
(64, 20, 0) & $2,3,5,7,11$ 	& $R_2, R_4^2, D_8^3$\\ \hline
(84, 35, 6) & $2,3,5,7,11$ 	& $R_4, R_2^2R_5$\\ \hline
\end{tabular}\qquad
\caption{Genus 9}\label{Ta:g9}
\end{table}

\begin{table}[ht]
\begin{tabular}{|| r | c | l |}
\hline
sequence $\mathbf{a}$ & char & sings\\ \hline\hline
(0, 0, 0)	& $2,5,7,11$ 	& $R_2^{11}, R_2^9D_3^2, R_2^2D_6^3, D_4^2D_6D_8$\\ \hline
*(1, 0, 0)	& $3$ 		& $R_2D_4^5, R_2D_3^{10},R_2^2D_3D_6^2$ \\ \hline
(5, 0, 0)	& $2,3,5,7,11$	& $R_2^8R_3, R_3D_4^4$ \\ \hline
*(6, 0, 0)	& $3_{b2}$ 		& $R_2^3R_3D_4D_6$ \\ \hline
(10, 0, 0)	& $2,3,5,7,11$	& $R_2^{11},R_2^5R_3^2, R_2^4R_3^2D_4, R_3^2D_5D_7$ \\ \hline
*(12, 0, 0)	& $3_b, 7_{b2}$ 	& $R_2^{9}D_4, R_2^3D_3D_4^2D_6, R_2^3R_3^2D_3, R_2^2D_4^3D_6$ \\ \hline 
*(20, 0, 0)  & $2_1,3,5,7,11$ & $R_2D_4^2D_6^2, R_2^4D_3D_4^3, R_2^8D_3^3, R_2^{10}D_3, R_2D_3D_4^2D_6$\\ \hline 
(35, 0, 0)   & $2,3,5,7,11$ 	& $R_2^5, R_2^2R_3^3, D_4D_6, R_2^4R_4$ \\ \hline
(35, 6, 0)   & $2_1,2_b,3,5,7,11$ & $R_2^5R_4, R_2^3R_4D_4$ \\ \hline 
(70, 6, 0)   & $2,3,5,7,11$ 	& $R_2^2R_3, R_2^2R_3R_4$ \\ \hline
(105, 27, 0) & $2_1,3_1,5,7,11$ & $\{\}$ \\ \hline
(140, 48, 7) & $2,3,5,7,11$ 	& $R_2R_5, R_3R_6$ \\ \hline
\end{tabular}
\caption{Genus 10}\label{Ta:g10}
\end{table}

\begin{table}[ht]
\begin{tabular}{|| r | c | l |}\hline
sequence $\mathbf{a}$ & char & sings \\ \hline\hline
(50, 0, 0, 0)     & $5,7,11$ 	& $R_2^{10}, R_2^7D_3^3, R_2^8D_3^2, R_2^4D_4^3$ \\ \hline
*(56, 0, 0, 0)    & $3$ 	& $R_2^5D_3^5$ \\ \hline
(60, 0, 0, 0)     & $2,5,7,11$& $R_2^{10}, R_2^4D_4^3, R_2^8D_3^2, D_3D_6^2D_7$ \\ \hline
*(64, 0, 0, 0)    & $2,3$ 	& $R_2^9D_3, R_2^8D_3^2, D_3D_6^3, R_2D_6^3$ \\ \hline
(65, 0, 0, 0)     & $5,7$ 	& $R_2^{10}, R_26D_3^4$ \\ \hline
*(68, 0, 0, 0)    & $2,3$ 	& $R_2^5D_3^5, R_2^6D_3^4, R_2^2D_4D_6^2$ \\ \hline
*(72, 0, 0, 0)	& $2_{b1}$	& $R_2^2D_4D_6^2$ \\ \hline
*(74, 0, 0, 0)    & $3_2$ 	& $R_2^9D_2$ \\ \hline
*(100, 0, 0, 0)   & $2,3,5,7,11$ & $R_2^7D_3^3, R_2^5D_3^5, R_2^4D_4^4, R_2^9D_3, R_2^6D_3^4, D_7^2D_8$ \\ \hline
(75, 6, 0, 0)     & $5,7,11$ 	   & $R_2^7R_3$ \\ \hline
(76, 6, 0, 0)     & $3$	 	   & $R_2^7R_3, R_2^3R_3D_4^2$ \\ \hline
*(78, 6, 0, 0)	& $2$		   & $R_2R_3D_6D_7, R_2R_3D_6^2$ \\ \hline
*(80, 6, 0, 0)    & $2,3,5,7,11$ & $R_2^6R_3D_3, R_2^3R_3D_3^4, R_3D_6D_8$ \\ \hline
(140, 12, 0, 0)   & $2,3,5,7,11$ & $R_2^4R_3^2$ \\ \hline
(210, 48, 0, 0)   & $2,3,5,7,11$ & $R_2^4, R_2R_3^3, D_4^2, R_2D_6$ \\ \hline
(210, 48, 7, 0)   & $2_1,2_b,3,5,7,11$ & $R_2^4R_4, R_2R_4D_3D_4$ \\ \hline 
(280, 104, 7, 0)  & $2,3,5,7,11$ & $R_2R_3R_4$ \\ \hline
(420, 216, 63, 8) & $2,3,5,7,11$ & $R_5, R_2R_6D_3$ \\ \hline
\end{tabular}
\caption{Genus 11}\label{Ta:g11}
\end{table}

\begin{table}[ht]
\begin{tabular}{|| c | r | c | l |}
\hline
genus & sequence $\mathbf{a}$ & char & sings \\ \hline\hline
12 & *(69, 0, 0, 0)   & $2,3,7$ 	       & $R_2^3D_6^2, D_6^3$ \\ \hline 
12 & *(75, 0, 0, 0)   & $2_1,3,5$	       & $R_2^3D_6^2, D_6^3$ \\ \hline
12 & *(105, 7, 0, 0)  & $2_1,2_b,3,5,7$    & $R_3D_6^2, R_2^4R_3D_4, R_2R_3D_4D_6$ \\ \hline
12 & *(216, 14, 0, 0) & $2_1,2_b,3,5,7,11$ & $R_3^2D_7$ \\ \hline
12 & (342, 63, 0, 0)  & $2,3,5,7,11$       & $R_2^3, R_3^3, R_2D_4, D_6$ \\ \hline 
12 & (342, 63, 8, 0)  & $2_1,3,5,7,11$     & $R_2^3R_4$ \\ \hline 
12 & (468, 147, 8, 0) & $2,3,5,7,11$       & $R_3,R_3R_4$ \\ \hline
12 & (720, 315, 80, 9)& $2,3_1,5,7,11$     & $R_2R_6$ \\ \hline\hline 
13 & (972, 315, 16, 0, 0) 	& $3_b, 5_b$     & $R_2R_3^2D_3, R_2^2R_3^2$ \\ \hline  
13 & (1224, 525, 80, 0, 0) 	& $2,3,5,7_1,11$ & $R_2^2, D_4$ \\ \hline
13 & (1224, 525, 80, 9, 0)    & $2_1,3,5,7,11$ & $R_2^2R_4$ \\ \hline
13 & (1980, 1155, 440, 99, 10)& $2,3_1,5,7,11$ & $R_6$ \\ \hline\hline 
14 & (1617, 440, 18, 0, 0)  & $2_{b1}$ 		    & $R_2R_3^2$\\ \hline 
14 & (2079, 770, 99, 0, 0)  & $2,3,5,7_{1},11$	    & $R_2, D_3$ \\ \hline
14 & (2079, 770, 99, 10, 0) & $2_1,3,5,7$ 	    & $R_2R_4$ \\ \hline 
14 & (3465,1760,594,120,11) & $2_4,3_4,5_4,7_4,11_4$& $R_2R_7$ \\ \hline
\end{tabular}
\caption{Genus $g \ge 12$}\label{Ta:g12}
\end{table}

\section{Generic Green}\label{S:genericGreen} 
A side goal was to verify generic Green's conjecture for our
acceptable genera and characteristics.  The conjecture is false in
prime characteristic (in particular in characteristic 2, genus 7), and
most of the literature (with the notable exception of \cite{Sch86})
is concerned with the genus zero case.  

\begin{remark}
A computational strategy for characteristic 0 involves the use of
$g$-cuspidal rational normal curves in $\PP^g$.  It fails completely
for characteristic 2 (the resulting rings are evidently not even
Gorenstein) and fails to give the generic diagrams for characteristics
3 and 5 for the genera of our study (Table~\ref{Ta:cusp}; for
background on this strategy, see \cite[Project 7,
pages 379--81]{Eis95} or \cite[pages 61--70]{Eis92}). 
\end{remark}

We were able to get generic diagrams for characteristics 2, 5, 7 and
11 (and other small $p\ne 3$) in genera 9 and 10, as well as the known
diagrams for $g \le 8$.  Interestingly, the closest to the generic
diagram for $\CHAR 3$ in genus 10 was the sequence $(1,0,0)$, found in
$\CHAR 3$ only, reminiscent of the case for $\CHAR 2$ in genus 7.
Something similar happens in genus 9 (see Table~\ref{Ta:g9}).  We
naively posit the following: 
\begin{conjecture}
Generic Green fails for $\CHAR 3$ in genera 9 and 10.  
\end{conjecture}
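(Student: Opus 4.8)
Each entry of the sequence $\mathbf{a}$ of (\ref{E:betti1}) is a graded Betti number of the canonical ring, i.e.\ the dimension of a Koszul cohomology group $K_{i,1}(C;K_C)$, and the generic curve of genus $g$ has Clifford index $c_{\text{gener}}$ in every characteristic (the general curve is Brill--Noether general over any algebraically closed field), so the expected (generic) value of every entry of $\mathbf{a}$ is $0$. Globalizing Green's Koszul complex over $\mathcal{M}_g \otimes \overline{\mathbb{F}}_3$ realizes the locus $V_g$ of curves with $\mathbf{a}(C) \neq (0,\dots,0)$ as a finite union of determinantal degeneracy loci, hence $V_g$ is closed; and since $\mathcal{M}_g \otimes \overline{\mathbb{F}}_3$ is irreducible of dimension $3g-3$, generic Green's conjecture fails for $g$ in characteristic $3$ if and only if $V_g$ is everything, i.e.\ if and only if $\dim V_g = 3g-3$. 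The whole problem is therefore to prove $\dim V_g = 3g-3$ for $g = 9, 10$.

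\textbf{The structural approach.} Because disproving generic Green requires \emph{every} smooth curve of the given genus and characteristic to carry an extra linear syzygy, the natural line of attack is to exhibit a geometric construction producing that syzygy uniformly. Mukai's description seems the right handle: a general canonical curve of genus $9$ is a transverse linear section of the Lagrangian Grassmannian of a $6$-dimensional symplectic space, and one of genus $10$ a transverse linear section of the $5$-dimensional homogeneous space of $G_2$ inside $\mathbb{P}^{13}$. Brill--Noether generality in characteristic $3$ supplies the $g^r_d$'s needed to run Mukai's construction for the general curve there; the claim would follow if one shows that over $\overline{\mathbb{F}}_3$ the ambient homogeneous space, or the incidence geometry cutting it out by quadrics, degenerates enough to force an extra quadratic syzygy on every such linear section. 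For genus $10$ this is especially plausible: characteristic $3$ is exactly where $G_2$ acquires its exceptional isogeny (the long/short root-length ratio being $\sqrt3$), so the $G_2$-variety and its Koszul cohomology genuinely change there. Genus $9$ (symplectic type, whose exceptional isogeny lives in characteristic $2$) presumably needs a separate mechanism that we do not identify.

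\textbf{Computational support.} In parallel, and as a necessary first check, I would promote a characteristic-$3$-only sequence from Tables~\ref{Ta:g9}--\ref{Ta:g10} — the sequence $(1,0,0)$ in genus $10$, and its genus-$9$ analogue — to a genuinely smooth example: run the adjunction procedure of \S\ref{S:singular} over a finite field $\mathbb{F}_{3^N}$, verify smoothness with the code described there and irreducibility with \verb@isPrime@, and recompute the Betti diagram, obtaining an explicit point $[C] \in V_g(\overline{\mathbb{F}}_3)$ together with its canonical ideal. One can then test the vanishing at $[C]$ of the differential of the minors defining $V_g$ — equivalently, whether every first-order deformation in $H^1(C,T_C)$ preserves the extra syzygy — a finite linear-algebra computation over $\mathbb{F}_{3^N}$ that is a necessary condition for $V_g = \mathcal{M}_g \otimes \overline{\mathbb{F}}_3$. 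This would be strong evidence, not a proof, since the tangent space being full at one point does not by itself pin down the local dimension of a singular-looking determinantal locus.

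\textbf{The main obstacle.} The crux is the structural step: finding the geometric reason the extra syzygy is forced on every curve, and proving it. The difficulty is compounded because the relevant foundations lie outside the standard literature — Mukai's biregularity statements are written in characteristic $0$, so the behavior of the Lagrangian Grassmannian and of the $G_2$-variety in bad characteristic, together with their defining quadrics and linear syzygies, would have to be developed from scratch. Even the already-observed exceptional generic diagram $(1,0)$ in genus $7$, characteristic $2$ (whose Mukai model is a $\mathrm{Spin}_{10}$-variety, and $2$ a bad prime there) is not explained in this paper, and a uniform understanding of all three of these small-characteristic anomalies is what a proof would ultimately require. Absent it, the statement must remain conjectural.
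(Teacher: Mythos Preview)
The statement is a \emph{conjecture}, and the paper does not prove it. The authors ``naively posit'' it on purely empirical grounds: in their search they never found a smooth characteristic-$3$ curve of genus $9$ or $10$ with $\mathbf{a}=(0,\dots,0)$, and the closest sequence they saw (e.g.\ $(1,0,0)$ in genus $10$) appeared only in characteristic $3$. There is no argument in the paper beyond this.

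Your proposal is likewise not a proof, and you say so explicitly in your final paragraph. So there is no proof-to-proof comparison to make. What you have written is a thoughtful research outline --- the reduction to showing $V_g=\mathcal{M}_g\otimes\overline{\mathbb{F}}_3$, the suggestion to exploit Mukai's linear-section descriptions and the exceptional isogeny of $G_2$ in characteristic $3$, and the first-order deformation test at an explicit point --- and it goes considerably further than the paper, which offers no mechanism at all. But none of these steps is carried out: the Mukai biregularity statements in characteristic $3$ are not established, the claimed degeneration of the ambient homogeneous variety's syzygies is not exhibited, and (as you note) the tangent-space computation would in any case give evidence rather than a proof. The genus-$9$ case is left without even a candidate mechanism.

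In short: the paper contains no proof, your submission contains no proof, and you correctly identify the obstacle. If the assignment was to supply a proof, the honest answer is that none is known; your write-up is a reasonable account of why, and of where one might look.
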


There was no hope of verifying generic Green for genera $g \ge 11$
given our calculational constraints. For example, any curve coming
from a degree $8$ plane curve with one ordinary node has gonality $\le
6$, and the general curve of genus $11$ has gonality $7$.  We were not
able to get M2 to handle the resolution of normalizations of curves of
degree $f \ge 9$ with many singularities of small order.

\begin{table}[ht]
\begin{tabular}{|| r || r | r | r | r |}
\hline
$g$ & char = 3 & 5 & 7 \\ \hline\hline
9 & (84, 35, 6) & (4, 0, 0) & (0, 0, 0) \\ \hline
10 & (140, 48, 7) & (5, 0, 0) & (0, 0, 0) \\ \hline
11 & (420, 216, 63, 8) & (35, 6, 0, 0) & (0, 0, 0, 0) \\ \hline
12 & (720, 315, 80, 9) & (48, 7, 0, 0) & (0, 0, 0, 0) \\ \hline
13 & (1980, 1155, 440, 99, 10) & (274, 63, 8, 0, 0) &(6, 0, 0, 0, 0)
\\ \hline
14 &(3465, 1760, 594, 120, 11) &(315, 80, 9, 0, 0) &(7, 0, 0, 0, 0)
\\ \hline
\end{tabular}
\caption{Sequences $\mathbf{a}$ from $g$-cuspidal rational normal
curves}\label{Ta:cusp}
\end{table}

\section{Degenerate Examples}\label{S:reducible}
A natural question is whether every sequence coming from a Gorenstein,
two-dimensional ring $S$ of degree $2g-2$ comes from a normal
canonical curve, i.e., whether nonsingularity or connectedness impose
significant conditions.  As indicated by the Example of
\S\ref{S:singular}, some sequences do not.

A few 2-linear strands coming from reducible curves are exhibited in
Table~\ref{Ta:red}; each came from a 2-dimensional Gorenstein ring of
degree $2g-2$ for the indicated genus $g$.  

\begin{table}[ht]
\begin{tabular}{|| c | r | l |}
\hline
genus & 2-linear strand & char \\ \hline 
6  & (6, 6, 1)              		& $3_b$\\ \hline 
7  & (10, 18, 11, 2)        		& $3_{b2}$\\ \hline
8  & (15, 35, 22, 1, 0)     		& $3_b$  \\ \hline
9  & (21, 64, 71, 24, 1, 0) 		& $3_2, 5$ \\ \hline 
9  & (21, 64, 75, 32, 5, 0)		& $2_4$ \\ \hline
9  & (21, 65, 76, 36, 6, 1) 		& $7_b, 11_2$ \\ \hline 
10 & (28, 105, 162, 90, 6, 0, 0)	& $5_{b2}$ \\ \hline
10 & (28, 105, 162, 101, 17, 0, 0)	& $2_4$ \\ \hline
10 & (28, 105, 163, 104, 20, 1, 0) 	& $7_2$ \\ \hline 
\end{tabular}
\caption{2-linear strands from reducible canonical curves}\label{Ta:red}

\end{table}

\section{Further questions and projects}\label{S:further}

In this section, we provide a list of possible future research projects. 

\begin{enumerate}



\item Can a Macaulay2 code be written to make effective use of coefficients in
finite fields $\mathbb{F}_{p^n}$?  

\item To what extent can quadratic transformations be used to improve
bad singularities?  Is it computationally feasible to use them to
improve our adjunction algorithm?

\item Can a Macaulay2 code be written to do adjunction on an arbitrary plane
curve, e.g., using the \verb&integralClosure& and \verb&ICmap&
operations?  Using these functions, can hyperelliptic curves be
included in the analysis?

\item Can a meaningful analysis be done for curves embedded in spaces
other than $\PP^2$?  We, for instance, considered the intersection of a
general quadric and general quartic in $\PP^3$, and the intersection of
two general cubic surfaces in $\PP^3$.  These gave the sequences
$(64,20,0)$ (genus 9) and $(20,0,0)$ (genus 10), respectively.  We did
not try looking at (normalizations of) singular curves produced in a
way similar to this.

\item Can the non-generic diagrams calculated for the cuspidal
rational normal curve example in Table \ref{Ta:cusp} be realized by
genus $g$ canonical curves?  If so, what is the connection? For example, in
characteristic 3, the sequences are those of trigonal curves.  More
generally, what diagrams can be realized as degenerations of others?

\item We generated our curves $V(F)$ by selecting an element of the
linear system of all curves of degree $d$ satisfying some imposed
constraints.  How are the diagrams of all elements of such a linear
system related? For example, what happens when we perturb coefficients?  Is
there a canonical choice always giving the ``generic" element (for
instance the sum of all generators of the system as calculated by M2)?

\item Is there an effectively computable way to determine the gonality 
(or Clifford index) of a plane curve given by a degree $d$ homogeneous 
polynomial?  Carefully check whether Green's conjecture is verified or 
denied by the examples herein (or others).  

\item For a general curve $C$ of genus $g$, what is the minimum degree 
$d$ of a plane curve $\Gamma$ onto which $C$ surjects?  How many
nodes or other singularities are required?  

\item What effect does assigning nonsingular points to plane curves
have?  For example, can new diagrams be obtained by requiring curves only to
pass through certain configurations of points (in addition to having
prescribed singularities at certain points)?

\item Do a study on reducible or non-reduced curves.  Is there a
criterion for determining whether a given Betti diagram comes from a reducible curve?

\item In each of the genera $g \in \{9,\ldots,14\}$, there were pairs of
  sequences of the form 
\begin{align*}
  (\ldots, &(g-5)(g-3), \:\quad 0 \:\quad, 0 ) \\
  (\ldots, &(g-5)(g-3), (g-4), 0 ).
\end{align*} 
The first was obtainable with $R_2^k$ and the second with $R_2^kR_4$,
where $k = 15-g$.  What are geometric descriptions of the corresponding curves,
and what is their relationship?  What other patterns can be found in
the data?  

\item More generally, what does the locus with given sequence
  $\mathbf{a}$ look like in the moduli space of curves?
\end{enumerate}

 
 

\bibliography{litlist-green} \label{references}
\bibliographystyle{plain}

\end{document}